\title{On totally $k$-closed nilpotent groups}
\author{Dmitry Churikov}
\affil{Novosibirsk State Technical University, Novosibirsk, Russia}
\newtheorem{theorem}{Theorem}[section]
\newtheorem*{theoremA}{Theorem A}
\newtheorem*{theoremB}{Theorem B}
\newtheorem{lemma}[theorem]{Lemma}
\newtheorem*{problem}{Problem}
\theoremstyle{definition}
\def\Om{\Omega}
\def\mZ{{\mathbb Z}}
\def\Sym{{\rm Sym}}
\def\Orb{{\rm Orb}}
\def\Syl{{\rm Syl}}
\begin{document}
\maketitle

\begin{abstract}
A group $G$ is said to be totally $k$-closed for a positive integer $k$ if, in each of its faithful permutation representations on a set $\Om^k$, $G$ is the largest subgroup of the symmetric group $\Sym(\Om)$ that preserves every $k$-orbit in the induced action on the set $\Om\times\dots\times \Om=\Om^k$. We prove that for $k\geq1$, every finite nilpotent group with Sylow subgroups of orders at most $p^k$ for all primes $p$ dividing $|G|$ is totally $k$-closed if and only if it does not contain an elementary abelian subgroup $\mZ_p^k$ for every prime~$p$.
\end{abstract}

\section{Introduction}\label{intro}

In 1969, Wielandt~\cite[Definition 5.3]{Wielandt1969} introduced the concept of the $k$-closure of a permutation group $G\leq\Sym(\Omega)$ for each positive integer $k$. The \emph{$k$-closure} $G^{(k)}$ of $G$ is the group of all permutations $g\in\Sym(\Om)$ that preserve each $G$-orbit in the induced $G$-action on ordered $k$-tuples from $\Om$. A permutation group $G$ is called \emph{k-closed} if $G^{(k)}=G$. Also Wielandt~\cite[Theorem 5.8]{Wielandt1969} proved that for $k\geq 2$
\begin{equation}
	G\leq G^{(k)}\leq G^{(k-1)}\leq\Sym(\Om).
\end{equation}
Thus if $G$ is $(k-1)$-closed, then it is also $k$-closed.

In 2016, D.~F.~Holt\footnote{\texttt{mathoverflow.net/questions/235114/2-closure-of-a-permutation-group}} proposed a concept of a totally closed (abstract) group that does not rely on a particular permutation representation. For a positive integer $k$, a group $G$ is said to be \emph{totally $k$-closed} if  $G^{(k)}=G$ whenever $G$ is faithfully represented as a permutation group on $\Om$. The only totally $1$-closed group is the trivial group, while Abdollahi and Arezoomand \cite[Theorem 2]{AA} showed that a finite nilpotent group is totally $2$-closed if and only if it is cyclic, or it is a direct product of a generalized quaternion group and a cyclic group of odd order. Since then totally $2$-closed finite groups were well described, see~\cite{AAT, AIPT}.

C.~E.~Praeger and the author studied totally $k$-closed abelian groups. By the fundamental theorem for finite abelian groups, each nontrivial finite abelian group $G$ can be decomposed into the direct product of certain cyclic groups, which are called the \emph{invariant factors} of $G$. It was discovered that the number of invariant factors $n(G)$ of an abelian group $G$ determines whether or not $G$ is totally $k$-closed.

\begin{theorem}\label{CPr}{\rm \cite[Theorem 1.2]{CPr}}
Let $G$ be a finite abelian group with $|G|>1$. Then $G$ is totally $(n(G)+1)$-closed, but is not totally $n(G)$-closed.
\end{theorem}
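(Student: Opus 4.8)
The plan is to work with the primary decomposition $G=\prod_pP_p$ and reduce both halves of the statement to the Sylow subgroups. Write $r_p$ for the rank (minimal number of generators) of $P_p$, so that $n:=n(G)=\max_pr_p$, and recall that total $m$-closedness implies total $m'$-closedness for $m'\ge m$, since $H\le H^{(m')}\le\cdots\le H^{(m)}$ in any fixed representation. The positive assertion will follow from a base lemma together with a Sylow-patching argument; the negative one from a single explicit representation.

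For ``$G$ is totally $(n+1)$-closed'' I would first record the \emph{base lemma}: if $H\le\Sym(\Om)$ has a base of size $b$ (an ordered $b$-tuple with trivial pointwise stabiliser) then $H=H^{(b+1)}$. Its proof is one line: for $\sigma\in H^{(b+1)}$ and any $\omega\in\Om$, preservation of the $H$-orbit of the base extended by $\omega$ yields a group element agreeing with $\sigma$ on the base and on $\omega$; it is unique because the base has trivial stabiliser, hence independent of $\omega$, so $\sigma\in H$. Next I would bound the base size of an abelian $p$-group $P$ of rank $r$: in any faithful representation the point stabilisers are subgroups $H_i\le P$ with $\bigcap_iH_i=1$, and since $K\cap\soc(P)=0$ forces $K=1$ for a subgroup $K$, a dimension-drop argument on the subspaces $H_i\cap\soc(P)$ of the $r$-dimensional space $\soc(P)$ produces $\le r$ of the $H_i$ meeting trivially; so the base size is $\le r$, and $P$ is totally $(r+1)$-closed, hence totally $(n+1)$-closed. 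Finally I would patch the primes together: in a faithful $G$-action each $G$-orbit is a direct product $\prod_pX^{(p)}$ (primary decomposition of the stabiliser), one checks that the $k$-closure of a product action $A\times B$ on $X\times Y$ is $A^{(k)}_X\times B^{(k)}_Y$ (iterate over the primes), and since the images of the $P_p$ in these product actions are abelian $p$-groups of rank $\le n$, hence totally $(n+1)$-closed, the $(n+1)$-closure of $G$ on each orbit equals the image of $G$ there. Therefore $G^{(n+1)}_\Om$ embeds by restriction into the direct product over the orbits of the images of $G$, which, $G$ being abelian, is a direct product over the primes; writing $\sigma\in G^{(n+1)}_\Om$ accordingly as a tuple of components $\sigma_p$, each $\sigma_p$ (as a permutation of $\Om$) preserves every $P_p$-orbit on $\Om^{n+1}$ — because a witnessing element of $G$ for a given $(n+1)$-tuple has $p$-part witnessing it for $P_p$ — hence lies in $(P_p)^{(n+1)}_\Om=P_p$, so $\sigma\in G$.

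For ``$G$ is not totally $n$-closed'' (the case $n=1$ being immediate, since only the trivial group is totally $1$-closed) I would use the identity $G^{(n)}_{\Om\sqcup\Lambda}=P^{(n)}_\Om\times Q^{(n)}_\Lambda$, valid when $G=P\times Q$ with $Q$ trivial on $\Om$ and $P$ trivial on $\Lambda$: this reduces the claim to producing, for the prime $p$ with $r_p=n$, a faithful representation of $P=P_p$ with $P^{(n)}\ne P$. When $P$ is elementary abelian of rank $n$ the action on $\bigsqcup_M P/M$ over all hyperplanes $M$ works: it is faithful because $\bigcap_MM=\Phi(P)=1$, on each copy $P/M\cong\mZ_p$ any permutation preserving the $2$-orbits is a translation, and the $n$-tuples meeting these copies do not force the translations to come from a single element of $P$ — already for $n=2$ the $n$-closure is the full group of translations of the $p+1$ copies, which exceeds $P$. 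For a general abelian $p$-group of rank $n$ one runs the analogous computation with a suitable faithful representation assembled from coset spaces modulo order-$p$ subgroups.

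The step I expect to be the main obstacle is exactly this last one: for a non-elementary-abelian abelian $p$-group $P$ of rank $n$, choosing an economical faithful representation and verifying by a hands-on analysis of its $n$-orbits that its $n$-closure properly contains $P$. A secondary subtlety lies in the positive direction: the base size alone does not pin down $G^{(n+1)}$ once several primes interact, so one genuinely needs the disjoint-union and product-action reductions, set up with some care, to cut $G^{(n+1)}$ down to $G$.
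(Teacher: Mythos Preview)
This theorem is not proved in the present paper: it is quoted from~\cite{CPr} and used as a black box in the proof of Theorem~B. There is therefore no in-paper argument to compare your proposal against.

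That said, your positive half is sound. The socle argument bounding the base size of a faithful abelian $p$-group of rank $r$ by $r$, combined with Wielandt's base lemma (Theorem~\ref{W}), gives $(r_p+1)$-closedness of each Sylow subgroup $P_p$, and your prime-patching can be made rigorous --- indeed it is the abelian special case of the identity $G^{(k)}=\prod_{p}P_p^{(k)}$ recorded here as Theorem~\ref{ChNl}, which would let you bypass the orbit-by-orbit product analysis entirely.

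Your negative half, however, contains a concrete error. For the elementary abelian group $P=\mZ_p^{\,n}$ you propose the action on $\bigsqcup_{M}P/M$ over \emph{all} hyperplanes $M$, asserting that ``the $n$-tuples meeting these copies do not force the translations to come from a single element of $P$''. This is true for $n=2$ but fails already for $P=\mZ_2^{\,3}$. There the seven hyperplanes correspond to the points of the Fano plane, and a $3$-tuple supported on three hyperplanes sharing a common line imposes a genuine linear constraint on the translation tuple $(s_M)_M\in\mZ_2^{\,7}$. These constraints are precisely the weight-$3$ words of the $[7,4,3]$ Hamming code; since those span the whole code, the $3$-closure is cut down to the dual $[7,3]$ simplex code, which is exactly the image of $P$. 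Hence this representation of $\mZ_2^{\,3}$ is $3$-closed and does \emph{not} witness failure of total $3$-closedness. A construction that does work takes only $n+1$ hyperplanes in general position (any $n$ of them linearly independent): then no $n$-tuple imposes a constraint, and the $n$-closure is the full $\mZ_p^{\,n+1}>P$. The general abelian $p$-group case, which you already flag as the main obstacle, requires a similarly careful choice of representation rather than the ``analogous computation'' you gesture at.
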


In this short note we study the following

\begin{problem}{\rm \cite[Problem 1]{CPr}}
For $k>2$ determine all finite nilpotent groups $G$ that are totally $k$-closed.
\end{problem}

The main results are as follows.

\begin{theoremA}
For $k\geq2$, a finite nilpotent group is totally $k$-closed if and only if all its Sylow subgroups are totally $k$-closed.
\end{theoremA}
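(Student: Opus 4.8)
The plan is to prove the two implications separately: the forward one is a short ``direct factor'' argument, while the converse carries the real content. For a group $X$ acting faithfully on a set $\Si$, write $X^{(k)}_\Si$ for the $k$-closure of $X$ in that action; recall that $X^{(k)}_\Si\le X^{(j)}_\Si$ for $1\le j\le k$, so in particular every element of $X^{(k)}_\Si$ stabilises each $X$-orbit on $\Si$. Since a finite nilpotent group is the direct product of its Sylow subgroups, Theorem~A will follow from two facts: \textbf{(a)} every direct factor of a totally $k$-closed group is totally $k$-closed; and \textbf{(b)} if $A$ and $B$ are totally $k$-closed finite groups with $\gcd(|A|,|B|)=1$, then $A\times B$ is totally $k$-closed. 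Indeed, (a) applied to the Sylow subgroups gives one direction, and (b) combined with induction on the number of prime divisors of $|G|$ gives the other.

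For (a): let $G=A\times B$ be totally $k$-closed and let $A$ act faithfully on $\Si$. Let $B$ act on $\De:=B$ by right multiplication and form the (faithful) action of $G$ on the disjoint union $\Si\sqcup\De$ in which $A$ acts on $\Si$ and trivially on $\De$, while $B$ acts on $\De$ and trivially on $\Si$. Given $\alpha\in A^{(k)}_\Si$, let $g\in\Sym(\Si\sqcup\De)$ agree with $\alpha$ on $\Si$ and with the identity on $\De$. A $k$-tuple from $\Si\sqcup\De$ splits into a subtuple from $\Si$ of some length $j\le k$ and a subtuple from $\De$, and its $G$-orbit is the product of the $A$-orbit of the former and the $B$-orbit of the latter; since $\alpha\in A^{(k)}_\Si\le A^{(j)}_\Si$, applying $g$ keeps each subtuple inside its orbit, so $g$ preserves every $G$-orbit on $k$-tuples and $g\in G^{(k)}=G$. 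As $g$ fixes $\De$ pointwise and $B$ is faithful on $\De$, necessarily $g\in A\times 1$, hence $\alpha=g|_\Si\in A$. Thus $A^{(k)}_\Si=A$ and $A$ is totally $k$-closed.

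For (b): let $G=A\times B$ with $\gcd(|A|,|B|)=1$, both factors totally $k$-closed, and let $G$ act faithfully on $\Om$ with $G$-orbit decomposition $\Om=\bigsqcup_j\Om_j$. By coprimality, every subgroup of $G$ is the direct product of its intersections with $A$ and $B$; so each point stabiliser satisfies $H_j=(H_j\cap A)\times(H_j\cap B)$, and each orbit is rectangular, $\Om_j\cong\De_{Aj}\times\De_{Bj}$ with $\De_{Aj}=A/(H_j\cap A)$ and $\De_{Bj}=B/(H_j\cap B)$, with $A$ acting on the first coordinate and $B$ on the second. The key combinatorial step is a \emph{product-form lemma}: for $k\ge 2$, every $g\in G^{(k)}$ restricts on each $\Om_j$ to a permutation of the form $\tilde g_j\times\bar g_j$ with $\tilde g_j\in\Sym(\De_{Aj})$ and $\bar g_j\in\Sym(\De_{Bj})$. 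Indeed, $g$ stabilises each $\Om_j$; within $\Om_j$, for points $\omega,\omega'$ in a common ``horizontal slice'' $\De_{Aj}\times\{\de\}$ the $k$-tuple $(\omega,\omega',\omega',\dots,\omega')$ of length $k\ge2$ lies in a $G$-orbit whose members all have their $k$ entries sharing one $B$-coordinate, so $g\omega$ and $g\omega'$ share a $B$-coordinate; letting $\omega'$ range over the slice, $g$ maps each horizontal slice into a single horizontal slice, hence permutes the horizontal slices and induces $\bar g_j\in\Sym(\De_{Bj})$. Symmetrically $g$ induces $\tilde g_j\in\Sym(\De_{Aj})$ from the vertical slices, and intersecting a horizontal with a vertical slice gives $g|_{\Om_j}=\tilde g_j\times\bar g_j$.

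Put $\La_A:=\bigsqcup_j\De_{Aj}$, on which $A$ acts and $B$ acts trivially. Since conjugation in $A\times B$ is componentwise, $\Core_G(H_j)$ also splits, so $\Ker(A\text{ on }\La_A)=\bigcap_j\Core_A(H_j\cap A)$ is the $A$-component of $\Ker(G\text{ on }\Om)=1$; hence $A$ acts faithfully on $\La_A$, and likewise $B$ on $\La_B:=\bigsqcup_j\De_{Bj}$. By the product-form lemma, $\psi_A\colon g\mapsto(\tilde g_j)_j\in\Sym(\La_A)$ and $\psi_B\colon g\mapsto(\bar g_j)_j\in\Sym(\La_B)$ are well-defined homomorphisms on $G^{(k)}$. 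Moreover $\psi_A(G^{(k)})\subseteq A^{(k)}_{\La_A}$: given a $k$-tuple from $\La_A$, lift each entry to a point of the relevant $\Om_j$ with the prescribed $A$-coordinate to get $\bar\om\in\Om^k$; any $(a,b)\in G$ agreeing with $g$ on $\bar\om$ has $a$ agreeing with $\psi_A(g)$ on the original tuple, so $\psi_A(g)$ preserves its $A$-orbit. Since $A$ is totally $k$-closed, $A^{(k)}_{\La_A}=A$, so $\psi_A(g)\in A$, and symmetrically $\psi_B(g)\in B$. Finally $(\psi_A,\psi_B)\colon G^{(k)}\to A\times B=G$ is injective — if $\psi_A(h)=\psi_B(h)=1$ then $h|_{\Om_j}=\mathrm{id}$ for all $j$, so $h=\mathrm{id}$ — and it restricts to the identity on $G\le G^{(k)}$; therefore $G^{(k)}=G$, which proves (b). The subtlety, and the main obstacle, is that $G$ need not act faithfully on an individual orbit $\Om_j$: the induced groups $G|_{\Om_j}$ are quotients of $G$ and may fail to be totally $k$-closed (already for $G=Q_8$), so one cannot argue orbit by orbit. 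The remedy is to reassemble the ``$A$-coordinates'' of all orbits into the single faithful $A$-set $\La_A$; the product-form lemma — where the hypotheses $k\ge 2$ and $\gcd(|A|,|B|)=1$ are both used — is what makes the maps $\psi_A,\psi_B$ available and jointly injective.
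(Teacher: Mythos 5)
Your proof is correct, but it takes a genuinely different route from the paper, which dispatches both directions of Theorem~A by invoking the structure theorem $G^{(k)}=\prod_{P\in\Syl(G)}P^{(k)}$ for finite nilpotent permutation groups from \cite{ChNl}. For necessity the paper uses essentially your construction in (a) --- the offending Sylow subgroup in a witnessing representation, the remaining Sylow subgroups acting regularly on themselves, all placed on a disjoint union --- but then reads off $G^{(k)}>G$ from the product formula, whereas you verify directly that the extension of $\alpha$ by the identity preserves orbits on $k$-tuples and then pin it down inside $G$; both are fine. The real divergence is in sufficiency: where the paper's argument is one line granted \cite{ChNl}, your step (b) is a self-contained elementary proof that a coprime direct product of totally $k$-closed groups is totally $k$-closed, exploiting that coprimality forces every point stabilizer to split and hence every orbit to be a rectangle $\De_{Aj}\times\De_{Bj}$, and that for $k\ge 2$ the tuples $(\omega,\omega',\dots,\omega')$ compel any element of the $k$-closure to respect this product structure. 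Reassembling the $A$-components of all orbits into the single faithful $A$-set $\Lambda_A$, rather than arguing orbit by orbit (which would indeed fail, as you note), is exactly the right move, and the injective homomorphism $(\psi_A,\psi_B)\colon G^{(k)}\to A\times B$ restricting to the identity on $G$ clinches the equality $G^{(k)}=G$. What you buy is independence from \cite{ChNl} and a lemma valid for arbitrary coprime direct products, not just nilpotent groups; what the paper buys is brevity and the more precise structural conclusion that $G^{(k)}$ itself factors as the product of the $P^{(k)}$.
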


\begin{theoremB}
Suppose that $G$ is a finite nilpotent group with Sylow $p$-subgroups of orders at most $p^k$ for all primes $p$ dividing $|G|$. Then $G$ is totally $k$-closed if and only if it does not contain an elementary abelian subgroup $\mZ_p^k$ for every prime~$p$.
\end{theoremB}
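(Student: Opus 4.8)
The plan is to reduce to Sylow subgroups via Theorem~A and then to translate total $k$-closedness of a $p$-group of small order into an elementary statement about minimal base sizes. Since $\mZ_p^k$ is itself a $p$-group, a nilpotent group $G$ with Sylow $p$-subgroups of order at most $p^k$ contains a subgroup isomorphic to $\mZ_p^k$ if and only if its Sylow $p$-subgroup has order exactly $p^k$ and is elementary abelian, i.e.\ isomorphic to $\mZ_p^k$. Hence, invoking Theorem~A (for $k\geq2$; the case $k=1$ is immediate, since the only totally $1$-closed group is trivial and the hypotheses then force $G=1$), it suffices to prove: \emph{a finite $p$-group $P$ with $|P|\leq p^k$ is totally $k$-closed if and only if $P\not\cong\mZ_p^k$.} For the ``only if'' direction, if $P\cong\mZ_p^k$ then its number of invariant factors is $n(P)=k$, so by Theorem~\ref{CPr} the group $P$ is not totally $k$-closed.

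For the ``if'' direction I would first isolate the following elementary fact as a lemma: \emph{if $H\leq\Sym(\De)$ has a base of size $b$, i.e.\ there exist $\de_1,\dots,\de_b\in\De$ with $H_{\de_1,\dots,\de_b}=1$, then $H^{(b+1)}=H$.} The proof is short: given $g\in H^{(b+1)}$ and any $\de\in\De$, the $(b+1)$-tuples $(\de_1,\dots,\de_b,\de)$ and $g(\de_1,\dots,\de_b,\de)$ lie in the same $H$-orbit, so there is $h_\de\in H$ agreeing with $g$ on $\{\de_1,\dots,\de_b,\de\}$; comparing $h_\de$ and $h_{\de'}$ on $\de_1,\dots,\de_b$ and using $H_{\de_1,\dots,\de_b}=1$ shows that $h_\de$ is independent of $\de$, whence $g\in H$. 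Together with the chain $H\leq H^{(k)}\leq H^{(b+1)}$ for $k\geq b+1$, this shows that $H$ is $k$-closed whenever it has a base of size at most $k-1$.

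It therefore remains to show that if $|P|\leq p^k$ and $P\not\cong\mZ_p^k$, then in every faithful permutation representation $P\leq\Sym(\Om)$ the group $P$ has a base of size at most $k-1$. As $P$ is faithful, the intersection of all point stabilizers $P_\omega$ is trivial, so choosing points greedily so that each new point strictly decreases the current intersection (which is possible as long as that intersection is nontrivial) produces a base; each step drops the order of the intersection by a factor of at least $p$, so with $|P|\leq p^k$ we already get a base of size at most $k$. Suppose, for contradiction, that $P$ has no base of size $k-1$. Starting a greedy base at a point $\omega$ with $|P:P_\omega|\geq p^2$ would yield a base of size at most $1+(k-2)=k-1$; hence every non-fixed $P$-orbit $\Om_i$ has size $p$, so the kernel $N_i=\core_P(P_\omega)$ of the action on $\Om_i$ has index $p$ in $P$. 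Thus each $N_i$ is a maximal subgroup of $P$ and so contains the Frattini subgroup $\Phi(P)$. Faithfulness forces $\bigcap_i N_i=1$, hence $\Phi(P)=1$, so $P$ is elementary abelian; and the greedy bound forces $|P|=p^k$, giving $P\cong\mZ_p^k$, a contradiction. This finishes the ``if'' direction.

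I expect the only real content — and the mild obstacle — to be the structural dichotomy in the last paragraph: deducing from the non-existence of a short base that every non-fixed orbit is regular of prime size, and then using the Burnside basis theorem (via $\Phi(P)=1$) to force $P\cong\mZ_p^k$. Everything else is routine bookkeeping with Theorem~A, Theorem~\ref{CPr}, and the base lemma; in particular the base lemma itself is very soft, so no delicate permutation-group machinery should be needed.
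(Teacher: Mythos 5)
Your argument is correct, and the sufficiency half takes a genuinely different (and more unified) route than the paper's. The paper splits the $p$-groups of order at most $p^k$ into two cases: for nonabelian $P$ it proves $b(P)\leq k-1$ by locating a nonabelian transitive constituent, whose point stabilizers must then have index at least $p^2$ (Lemma~\ref{Base}), and combines this with Wielandt's Theorem~\ref{W}; for abelian $P\not\cong\mZ_p^k$ it instead quotes Theorem~\ref{CPr}, since such a group has at most $k-1$ invariant factors. You prove a single statement covering both cases at once: every faithful permutation representation of a $p$-group of order at most $p^k$ other than $\mZ_p^k$ admits a base of size at most $k-1$, because if no such base existed then every non-fixed orbit would have size $p$, all the orbit kernels would be maximal subgroups containing $\Phi(P)$, and faithfulness would force $\Phi(P)=1$ and $|P|=p^k$, i.e.\ $P\cong\mZ_p^k$. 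This makes the sufficiency direction independent of Theorem~\ref{CPr} (which you then need, as the paper also does, only to see that $\mZ_p^k$ itself is not totally $k$-closed), at the cost of reproving Wielandt's Theorem~\ref{W} rather than citing it --- though your short proof of that lemma is sound. The reduction to Sylow subgroups via Theorem~A and the observation that $G$ contains a copy of $\mZ_p^k$ exactly when its Sylow $p$-subgroup is isomorphic to $\mZ_p^k$ match the paper; your explicit treatment of $k=1$ is a small bonus the paper leaves implicit.
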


\section{Proofs}

The sufficiency in Theorem A easily follows from the main result of~\cite{ChNl}.

\begin{theorem}\label{ChNl}{\rm \cite[Theorem 1.1]{ChNl}}
	If $G$ is a finite nilpotent permutation group, and $k\geq 2$, then
	$$
	G^{(k)}=\prod_{P\in\operatorname{Syl}(G)} P^{(k)}.
	$$
\end{theorem}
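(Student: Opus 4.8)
The plan is to realise $G^{(k)}$ as an internal direct product of the subgroups $P^{(k)}$, with $P$ ranging over $\Syl(G)$, by exploiting the coprimality of the Sylow orders, which forces every $G$-orbit to carry a product (Cartesian) structure indexed by the primes dividing $|G|$. Write $G=\prod_{i}P_i$ for the Sylow decomposition, where $P_i$ is the Sylow $p_i$-subgroup and all $P_i\trianglelefteq G$ since $G$ is nilpotent, and set $Q_i=\prod_{i'\neq i}P_{i'}\trianglelefteq G$. The one place where coprimality is essential is the standard fact that every subgroup $H\leq G$ of a nilpotent group factors as $H=\prod_i(H\cap P_i)$; applying this to point stabilisers shows that each $G$-orbit $\De$ decomposes as $\De=\prod_i\De_i$, where $\De_i$ is the $P_i$-orbit (equivalently $\De/Q_i$), and that $G$ acts on $\De$ as the corresponding product action. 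I would record this product structure first, since everything downstream is formal once it is available.

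Next I would prove the easy inclusion $\prod_i P_i^{(k)}\leq G^{(k)}$. Because $P_i\leq G$, every permutation fixing all $P_i$-orbits on $\Om^k$ also fixes the coarser $G$-orbits, so $P_i^{(k)}\leq G^{(k)}$. Moreover $P_i^{(k)}\leq P_i^{(1)}$ preserves every $P_i$-orbit on $\Om$ setwise; as the $P_i$-orbits are exactly the ``lines'' obtained by varying the $i$-th coordinate, $P_i^{(k)}$ moves only the $i$-th coordinate inside each $G$-orbit. Hence the $P_i^{(k)}$ act on pairwise disjoint coordinates, so they commute, intersect trivially, and generate their internal direct product inside $G^{(k)}$.

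For the reverse inclusion take $g\in G^{(k)}$. Since $k\geq2$ we have $g\in G^{(2)}$, so $g$ preserves every $G$-orbit on $\Om^2$; in particular, for each normal subgroup $Q_i$ the relation ``lie in the same $Q_i$-orbit'' is a union of $G$-orbitals and is therefore $g$-invariant. Thus $g$ permutes the blocks of the $Q_i$-partition and induces $\hat g_i\in\Sym(\Om/Q_i)$. The product of quotient maps $\Om\hookrightarrow\prod_i(\Om/Q_i)$ is injective, because two points with the same image agree in every coordinate within a common $G$-orbit and hence coincide, so $g$ is forced to act coordinatewise as the product permutation determined by the $\hat g_i$. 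Finally I would feed this back into the $k$-orbit structure: via the coordinate projections, a $G$-orbit on $\Om^k$ is precisely the set of $k$-tuples whose $i$-th projection lies in a prescribed $P_i$-orbit of $(\Om/Q_i)^k$, for every $i$ simultaneously. Since $g$ preserves this orbit and acts as $\hat g_i$ on the $i$-th projection, $\hat g_i$ must map each tuple of $(\Om/Q_i)^k$ into its own $P_i$-orbit; as every such tuple arises, $\hat g_i$ lies in the $k$-closure of $P_i$ on $\Om/Q_i$. Translating back through the coordinate structure, the permutation $g_i$ of $\Om$ that realises $\hat g_i$ on the $i$-th coordinate and fixes the others lies in $P_i^{(k)}$, and $g=\prod_i g_i$; hence $g\in\prod_i P_i^{(k)}$, and combining the two inclusions gives $G^{(k)}=\prod_{P\in\Syl(G)}P^{(k)}$.

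I expect the main obstacle to be the intransitive case. When $G$ is not transitive, the $G$-orbits on $\Om^k$ mix entries from different $G$-orbits of $\Om$, so one must track the ``type'' $(j(1),\dots,j(k))$ recording which orbit each coordinate entry lies in and verify that the product decomposition of orbits is compatible across the whole of $\Om$ rather than merely on a single orbit; this is exactly what makes the injectivity of $\Om\hookrightarrow\prod_i(\Om/Q_i)$ and the coordinatewise action into genuinely global statements. The one structural input that cannot be circumvented is the nilpotent subgroup factorisation $H=\prod_i(H\cap P_i)$: it fails without coprime Sylow orders, as a diagonal subgroup of $\mZ_p\times\mZ_p$ shows, and its failure is precisely why the analogous factorisation breaks down for non-coprime direct products.
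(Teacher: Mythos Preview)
The paper does not prove this theorem at all: it is quoted verbatim from \cite[Theorem~1.1]{ChNl} and used as a black box to establish Theorem~A. There is therefore no in-paper proof to compare your attempt against.

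That said, your sketch is essentially correct as an independent argument, and it is along the lines one would expect: the crucial structural input is indeed the factorisation $H=\prod_i(H\cap P_i)$ for subgroups of a nilpotent group, applied to point stabilisers to obtain the Cartesian decomposition of each $G$-orbit; after that, both inclusions follow by the bookkeeping you describe. A couple of places would need to be tightened for a full proof. First, in the easy inclusion you should make explicit that an element of $P_i^{(k)}$ fixes every $Q_{i'}$-class for $i'\neq i$ (this follows already from $P_i^{(1)}$, since each $P_i$-orbit lies inside a single $Q_{i'}$-class), so that the coordinate picture really gives commuting actions. Second, when you ``translate back'' and define $g_i$ acting via $\hat g_i$ on the $i$-th coordinate, you should check that this really defines a permutation of $\Om$: the image of $\Om$ in $\prod_i(\Om/Q_i)$ is not the full product but the disjoint union over $G$-orbits $\De$ of the sub-products $\prod_i(\De/Q_i)$, and one needs $\hat g_i$ to preserve each piece $\De/Q_i$, which holds because $g\in G^{(1)}$ stabilises every $G$-orbit. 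With these points made explicit the argument goes through; your own caveat about the intransitive case is exactly the right place to be careful.
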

Now suppose that $G$ is a totally $k$-closed finite nilpotent group, $G_p\in\Syl(G)$ for each $p$, and $G_q$ is not totally $k$-closed for some $q$ and $k\geq 2$. Then there exists a faithful permutation representation $H_q$ of $G_q$ on a set $\Omega_q$ such that $H_q^{(k)} > H_q$. For each other prime divisor $p$ of $|G|$ put $\Omega_p = G_p$, and consider $G_p$ acting regularly on $\Omega_p$ by right multiplication resulting permutation representation $H_q$. Thus $G$ acts faithfully on $\bigcup_{p\in\pi(G)} \Omega_p$, and  Theorem~\ref{ChNl} implies that
$$
G^{(k)}=\prod_{p\in\pi(G)} G_p^{(k)}=G_q^{(k)}\times\prod_{\substack{p\in\pi(G) \\ p\neq q}} G_p^{(k)},
$$
which is not equal to $G$, because $G_q^{(k)}>G_q$ and for every $p\in\pi(G), p\neq q$ the group $G_p$ is $k$-closed as a regular group. Thus, $G$ is not totally $k$-closed, and the proof of Theorem~A is complete.\medskip

In order to prove Theorem~B we need some observations on bases of $p$-groups. Let $G$ be a permutation group on a set $\Omega$. A subset of $\Omega$ is said to be a \emph{base} for $G$ if its pointwise stabilizer in $G$ is trivial. The minimal size of a base for $G$ is called the \emph{base number} of $G$ and denoted by~$b(G)$.

\begin{lemma}\label{Base}
	If $G$ is a permutation group of order $p^k$, then $b(G)\leq k$. Moreover, if $G$ is nonabelian, then $b(G)\leq k-1$. The bounds are strict.
\end{lemma}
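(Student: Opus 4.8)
The plan is to treat the three assertions separately, the only one with real content being the improvement in the nonabelian case. For $b(G)\le k$ I would use the standard stabiliser‑chain argument, phrased as an induction on $k$: if $G\neq 1$ acts faithfully on $\Om$ it acts nontrivially, so some point $\omega_1$ lies in an orbit of size at least $p$; then $G_{\omega_1}$ has order at most $p^{k-1}$, the inductive hypothesis gives a base $B$ of size at most $k-1$ for the action of $G_{\omega_1}$ on $\Om$, and $\{\omega_1\}\cup B$ is a base for $G$. Hence $b(G)\le 1+b(G_{\omega_1})\le k$.

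For the nonabelian refinement I would induct on $|G|$. Let $N$ be a minimal normal subgroup of $G$; since $G$ is a $p$-group, $N$ is central of order $p$. Faithfulness forces $N$ to move some point $\omega_1$, and as $|N|=p$ this gives $N\cap G_{\omega_1}=1$; since $N\trianglelefteq G$ it follows that $|N|\,|G_{\omega_1}|$ divides $|G|$, so $|G_{\omega_1}|\le p^{k-1}$. Now split into two cases. If $|G_{\omega_1}|\le p^{k-2}$, then $b(G_{\omega_1})\le k-2$ by the first part, so $b(G)\le 1+b(G_{\omega_1})\le k-1$. If instead $[G:G_{\omega_1}]=p$, then $G_{\omega_1}$ is normal and, together with $N$, yields an internal direct product $G=G_{\omega_1}\times N$; since $N$ is abelian and $G$ is not, $G_{\omega_1}$ is a nonabelian group of order $p^{k-1}$, the inductive hypothesis gives $b(G_{\omega_1})\le k-2$, and again $b(G)\le k-1$. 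The induction bottoms out automatically: the inductive hypothesis is invoked only when $G_{\omega_1}$ is nonabelian of order $p^{k-1}\ge p^3$, and in all remaining cases the first part already suffices (in particular the case $|G|=p^3$ needs no separate treatment, since there $G_{\omega_1}$ has order at most $p^2$, hence is abelian, so $[G:G_{\omega_1}]=p$ would force $G$ abelian).

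For the claimed strictness I would exhibit examples. For $b(G)\le k$: let $\mZ_p^k$ act on the disjoint union of $k$ copies $\Om_1,\dots,\Om_k$ of $\mZ_p$, the $i$-th direct factor acting regularly on $\Om_i$ and trivially elsewhere; this action is faithful, and any base must meet every $\Om_i$, so $b=k$. For $b(G)\le k-1$ (relevant only when $k\ge 3$): take a nonabelian group $P$ of order $p^3$ with a faithful permutation representation all of whose point stabilisers are nontrivial — for instance $D_4$ on the $4$ vertices of a square when $p=2$, and the Heisenberg group acting on the $p^2$ cosets of a noncentral subgroup of order $p$ when $p$ is odd — so that $b(P)=2$ by the refinement just proved; then let $P\times\mZ_p^{k-3}$ act on the disjoint union of that $P$-set with $k-3$ regular $\mZ_p$-sets, and read off from the direct-product structure that $b=2+(k-3)=k-1$.

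The step I expect to be the main obstacle is setting up the induction for the nonabelian case correctly. The naive attempt — assume $b(G)=k$, fix a minimal base, and try to extract structure from its stabiliser chain — stalls, because a $p$-group can perfectly well have an abelian maximal subgroup without being abelian, so $b(G)=k$ does not obviously constrain $G$. The resolution is not to analyse a given base but to \emph{construct} a short one: choosing $\omega_1$ adapted to a central subgroup of order $p$ makes the only bad case the one in which $G_{\omega_1}$ complements that subgroup, which simultaneously forces $G_{\omega_1}$ nonabelian and makes the inductive hypothesis available. Everything else — the bound $b(G)\le k$ and the tightness constructions — is routine.
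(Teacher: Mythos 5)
Your proof is correct, and the first part together with the tightness examples matches the paper's in substance (the paper exhibits $\mZ_2^k$ and $D_8\times\mZ_2^k$ acting on disjoint unions of small orbits, exactly the shape of your constructions, though only for $p=2$; yours also cover odd primes via the Heisenberg group). The nonabelian refinement is where you genuinely diverge. The paper does not induct: since $G$ embeds in the direct product of its transitive constituents, some constituent $G^{\Delta}$ is nonabelian; picking $\alpha\in\Delta$ gives $|G_{\alpha}|=p^{i}$ with $i\le k-1$, and the troublesome case $i=k-1$ is excluded outright because it forces $|\Delta|=p$, and a transitive $p$-group of degree $p$ is cyclic of order $p$, hence abelian, contradicting the choice of $\Delta$. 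You instead choose $\omega_1$ moved by a central subgroup $N$ of order $p$ and, in the troublesome case $[G:G_{\omega_1}]=p$, split $G=G_{\omega_1}\times N$ so that $G_{\omega_1}$ inherits nonabelianness and an induction on $|G|$ applies; your check that the induction bottoms out at $|G|=p^3$ (where the splitting would force $G$ abelian) is the right one. Both resolutions of the bad case are sound: the paper's is slightly shorter, avoiding the induction and its bookkeeping, while yours avoids the ``nonabelian transitive constituent'' observation and replaces a one-line contradiction with a clean structural reduction. Either way one correctly arrives at $b(G)\le 1+b(G_{\omega_1})\le k-1$.
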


\begin{proof}
	Construct a decreasing chain of stabilizers for $G$:
	$$G> G_{\alpha_1}> G_{\alpha_1\alpha_2}> \ldots> G_{\alpha_1\dots\alpha_{d}}=1.$$
	Since $|G|=p^k$, the chain must end within a maximum of $k$ steps, so $b(G)\leq d\leq k$.

	Now let $G$ be nonabelian. Every permutation group is a subgroup of the direct product of its transitive constituents. Since $G$ is nonabelian, there exists a nonabelian transitive constituent $G^\Delta$, with $\Delta\in\Orb(G)$. If $\alpha\in\Delta$, then $G_\alpha$ is a $p$-group too, so $|G_\alpha|=p^i$, with $0\leq i\leq k-1$.

	If $i \leq k-2$, then previous reasonings imply that $b(G_\alpha) \leq k-2$, and consequently, $b(G) \leq k-1$.
	
	If $i = k-1$, then $|G:G_\alpha|=p$, so $|\Delta|=p$ and $G^\Delta$ is abelian, which is not the case.
	
	For $k\geq 1$, the groups $\mZ_2^k=\langle (1~2), (3~4),\ldots,(2k-1~2k) \rangle$ and $D_8\times \mZ_2^k=\langle (1~2~3~4), (1~3), (5~6), (7~8),\ldots,(2k+3~2k+4) \rangle$ have bases $\{2,4,6,\ldots,2k\}$ and $\{1,2,6,8,\ldots,2k+4\}$ respectively, so the bounds are strict.
\end{proof}

The classical result of Wielandt establishes a connection between the base number of a group and its upper bound to be $k$-closed.

\begin{theorem}\label{W}{\rm\cite[Theorem 5.12]{Wielandt1969}}\quad  Let $G\leq \Sym(\Omega)$ and $k\geq1$, and suppose that $\alpha_1,\ldots,\alpha_{k}\in\Omega$ such that $G_{\alpha_1\ldots \alpha_{k}}=1$. Then $G^{(k+1)}=G$. In other words, if $b(G)\leq k$, then $G$ is $(k+1)$-closed.
\end{theorem}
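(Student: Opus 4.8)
The plan is to prove the statement directly, using only the definition of the $(k+1)$-closure and the hypothesis $G_{\alpha_1\ldots\alpha_k}=1$. Since the inclusion $G\leq G^{(k+1)}$ always holds, it suffices to establish the reverse inclusion $G^{(k+1)}\leq G$. So I would fix the base points $\alpha_1,\ldots,\alpha_k$ with trivial pointwise stabilizer, take an arbitrary $g\in G^{(k+1)}$, and aim to produce a single element $h\in G$ that agrees with $g$ as a permutation of the whole set $\Omega$.

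The main device is to probe $g$ using the base together with one free point. For each $\beta\in\Omega$, I would consider the ordered $(k+1)$-tuple $(\alpha_1,\ldots,\alpha_k,\beta)$. Because $g$ preserves every $G$-orbit on $(k+1)$-tuples, the image $(\alpha_1^g,\ldots,\alpha_k^g,\beta^g)$ lies in the same $G$-orbit, so there exists $h=h_\beta\in G$ with $\alpha_i^g=\alpha_i^{h}$ for $1\le i\le k$ and $\beta^g=\beta^{h}$. A priori the element $h_\beta$ depends on the chosen point $\beta$.

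The key step, and the place where the base hypothesis is essential, is to show that $h_\beta$ is in fact independent of $\beta$. The first $k$ equalities assert only that $h_\beta$ agrees with $g$ on the base. If $h,h'\in G$ both satisfy $\alpha_i^{h}=\alpha_i^{h'}$ for every $i$, then $h(h')^{-1}$ fixes each $\alpha_i$ and hence lies in $G_{\alpha_1\ldots\alpha_k}=1$, forcing $h=h'$. Thus an element of $G$ is completely determined by its action on the base, and there is a unique $h_0\in G$ with $\alpha_i^g=\alpha_i^{h_0}$ for all $i$; consequently every $h_\beta$ coincides with this common $h_0$.

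Finally I would read off the conclusion from the last coordinate. For each $\beta\in\Omega$ we have $\beta^g=\beta^{h_\beta}=\beta^{h_0}$, and since $\beta$ was arbitrary this gives $g=h_0\in G$. Hence $G^{(k+1)}\leq G$, and combined with the reverse inclusion we obtain $G^{(k+1)}=G$. I expect the uniqueness argument to be the only genuinely substantive step: once one observes that triviality of the pointwise stabilizer of the base makes the dependence on $\beta$ collapse, the agreement of $g$ with a fixed $h_0$ on the base propagates automatically to agreement on all of $\Omega$.
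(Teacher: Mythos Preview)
Your argument is correct and is precisely the classical proof of Wielandt's Theorem~5.12. Note, however, that the paper does not supply its own proof of this result: it is quoted verbatim as \cite[Theorem~5.12]{Wielandt1969} and used as a black box, so there is nothing in the paper to compare your proof against. Your write-up matches the standard treatment and would serve perfectly well as a self-contained justification.
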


An easy combination of Theorem~\ref{W} and Lemma~\ref{Base} results the following

\begin{lemma}\label{NA}
Every nonabelian group of order at most $p^k$ is totally $k$-closed.
\end{lemma}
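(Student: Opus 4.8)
The plan is to treat each faithful permutation representation separately, so that the statement reduces immediately to Lemma~\ref{Base} and Wielandt's bound Theorem~\ref{W}; here $G$ is understood to be a nonabelian $p$-group, so that $|G|=p^i$ for some integer $i\le k$.

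First I would record the elementary fact that a nonabelian $p$-group has order at least $p^3$ --- every group of order $1$, $p$, or $p^2$ is abelian --- so that $3\le i\le k$, and in particular $k\ge 3$. This is what makes the sharper clause of Lemma~\ref{Base} (the bound $b(G)\le k-1$ for nonabelian groups of order $p^k$) both applicable and non-vacuous.

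Next, let $G$ act faithfully on a set $\Omega$, so that $G\le\Sym(\Omega)$ is a nonabelian permutation group of order $p^i$. Applying Lemma~\ref{Base} with $i$ in place of $k$ gives $b(G)\le i-1\le k-1$, so there exist points $\alpha_1,\dots,\alpha_{k-1}\in\Omega$ with $G_{\alpha_1\dots\alpha_{k-1}}=1$. Feeding this into Theorem~\ref{W}, with its integer parameter set equal to $k-1$, yields $G^{(k)}=G$; that is, $G$ is $k$-closed in this representation. Since the faithful representation was arbitrary, $G$ is totally $k$-closed.

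I do not expect any genuine obstacle here: the entire content sits in Lemma~\ref{Base} and Theorem~\ref{W}, both already in hand. The only points requiring care are the index bookkeeping (that $|G|=p^i$ with $i\le k$, not necessarily $i=k$) and the observation that nonabelianness forces $i\ge 3$, so that invoking Lemma~\ref{Base} at level $i$ legitimately produces a base of size at most $k-1$ rather than merely at most $k$.
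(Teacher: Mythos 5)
Your proof is correct and is exactly the argument the paper intends: the paper derives Lemma~\ref{NA} as an ``easy combination'' of Lemma~\ref{Base} (applied at the actual order $p^i$, $i\le k$, giving $b(G)\le i-1\le k-1$) with Theorem~\ref{W}. Your extra bookkeeping about $i\ge 3$ is a harmless elaboration of the same route.
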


Now we prove Theorem~B. Let $G$ be a nilpotent group with Sylow $p$-subgroups of orders at most $p^k$. Theorem~A implies that $G$ is totally $k$-closed if and only if all its Sylow subgroups are totally $k$-closed. A group of order at most $p^k$ is totally $k$-closed if and only if it is not $\mZ_p^k$, because all such nonabelian groups are $k$-closed by Lemma~\ref{NA} and all such abelian groups except $\mZ_p^k$ are totally $k$-closed by Theorem~\ref{CPr}.

\subsection*{Acknowledgements}
The author would like to thank Andrey Vasil'ev for valuable comments.


\begin{thebibliography}{99}

\bibitem{Wielandt1969} H. W. Wielandt, Permutation groups through invariant relations and invariant
functions, Lecture Notes, Ohio State University, 1969. Also published in: Wielandt, Helmut, Mathematische Werke (Mathematical works) Vol. 1. Group theory. Walter de Gruyter \& Co., Berlin, 1994, 237--296.

\bibitem{AA} A. Abdollahi and M. Arezoomand, Finite nilpotent groups that coincide with
their 2-closures in all of their faithful permutation representations, J. Algebra
Appl., 17(4) 2018, 1850065.

\bibitem{AAT} A. Abdollahi, M. Arezoomand and Gareth Tracey, On finite totally $2$-closed groups. Comptes Rendus. Mathématique, vol. 360 (2022), 1001-1008.

\bibitem{AIPT} M. Arezoomand, M. Iranmanesh, C. E. Praeger and G. Tracey, Totally 2-closed finite groups with trivial Fitting subgroup, Bulletin of Mathematical Sciences, 2023, 2350004.

\bibitem{CPr} D. Churikov and C. E. Praeger, Finite totally $k$-closed groups, Trudy Instituta Matematiki i Mekhaniki UrO RAN, vol. 27, no. 1 (2021), 240-245.

\bibitem{ChNl} D.V. Churikov Structure of $k$-Closures of Finite Nilpotent Permutation Groups. Algebra Logic 60 (2021), 154-159.
\end{thebibliography}
\end{document}